\newtheorem{theorem}{Theorem}
\newtheorem{lemma}{Lemma}
\newtheorem{proposition}{Proposition}
\newenvironment{proof1}{
    \noindent {\em Proof }}{\hfill$\Box$}
\begin{document}

\title{\bf Short-wave transverse instabilities of line solitons of \\
the 2-D hyperbolic nonlinear Schr\"odinger equation}

\author{D.E. Pelinovsky$^{1,2}$, E.A. Ruvinskaya$^2$, O.A. Kurkina$^2$, B. Deconinck$^{3}$, \\
$^{1}$ {\small \it Department of Mathematics and Statistics, McMaster
University, Hamilton, Ontario, Canada, L8S 4K1 } \\
$^{2}$ {\small \it Department of Applied Mathematics, Nizhny Novgorod State
Technical University, Nizhny Novgorod, Russia }\\
$^{3}$ {\small \it Department of Applied Mathematics, University of Washington
Seattle, WA 98195-3925, USA}
}

\date{\today}
\maketitle

\begin{abstract}
We prove that line solitons of the two-dimensional hyperbolic nonlinear Schr\"odinger
equation are unstable with respect to transverse perturbations of arbitrarily small periods,
{\em i.e.}, short waves. The analysis is based on the construction of Jost functions
for the continuous spectrum of Schr\"{o}dinger operators,
the Sommerfeld radiation conditions, and the Lyapunov--Schmidt decomposition. Precise asymptotic
expressions for the instability growth rate are derived in the limit of short periods.
\end{abstract}

\section{Introduction}

Transverse instabilities of line solitons have been studied in many nonlinear evolution equations
(see the pioneering work \cite{ZakRub} and the review article \cite{KivPel}). In particular, this problem
has been studied in the context of the hyperbolic nonlinear Schr\"{o}dinger (NLS) equation
\begin{equation}
\label{NLS}
i \psi_t + \psi_{xx} - \psi_{yy} + 2 |\psi|^2 \psi = 0,
\end{equation}
which models oceanic wave packets in deep water. Solitary waves of the one-dimensional ($y$-independent)
NLS equation exist in closed form. If all parameters of a solitary wave have been
removed by using the translational and scaling invariance, we
can consider the one-dimensional trivial-phase solitary wave in the simple form $\psi = {\rm sech}(x) e^{it}$.
Adding a small perturbation $e^{i \rho y + \lambda t + i t} (U(x) + i V(x))$
to the one-dimensional solitary wave and linearizing the underlying equations, we obtain
the coupled spectral stability problem
\begin{equation}
\label{problem-1}
(L_+ - \rho^2) U = -\lambda V, \quad (L_- - \rho^2) V = \lambda U,
\end{equation}
where $\lambda$ is the spectral parameter,
$\rho$ is the transverse wave number of the small perturbation,
and $L_{\pm}$ are given by the Schr\"{o}dinger operators
\begin{equation*}
L_+ = -\partial_x^2 + 1 - 6 {\rm sech}^2(x), \quad
L_- = -\partial_x^2 + 1 - 2 {\rm sech}^2(x).
\end{equation*}
Note that small $\rho$ corresponds to long-wave perturbations in the transverse directions, while large $\rho$ corresponds to short-wave transverse perturbations.

Numerical approximations of unstable eigenvalues (positive real part) of the spectral stability problem (\ref{problem-1})
were computed in our previous work \cite{DecPel} and reproduced recently by independent numerical
computations in \cite[Fig. 5.27]{Yang} and \cite[Fig. 2]{Barashenkov}. Fig. 2 from \cite{DecPel}
is reprinted here as Figure \ref{tfig1}. The figure illustrates various bifurcations at $P_a$, $P_b$,
$P_c$, and $P_d$, as well as the behavior of eigenvalues and the continuous spectrum
in the spectral stability problem (\ref{problem-1}) as a function of the transverse wave number $\rho$.

\begin{figure}[tbp]
\begin{center}
\includegraphics[width=70mm,height=50mm]{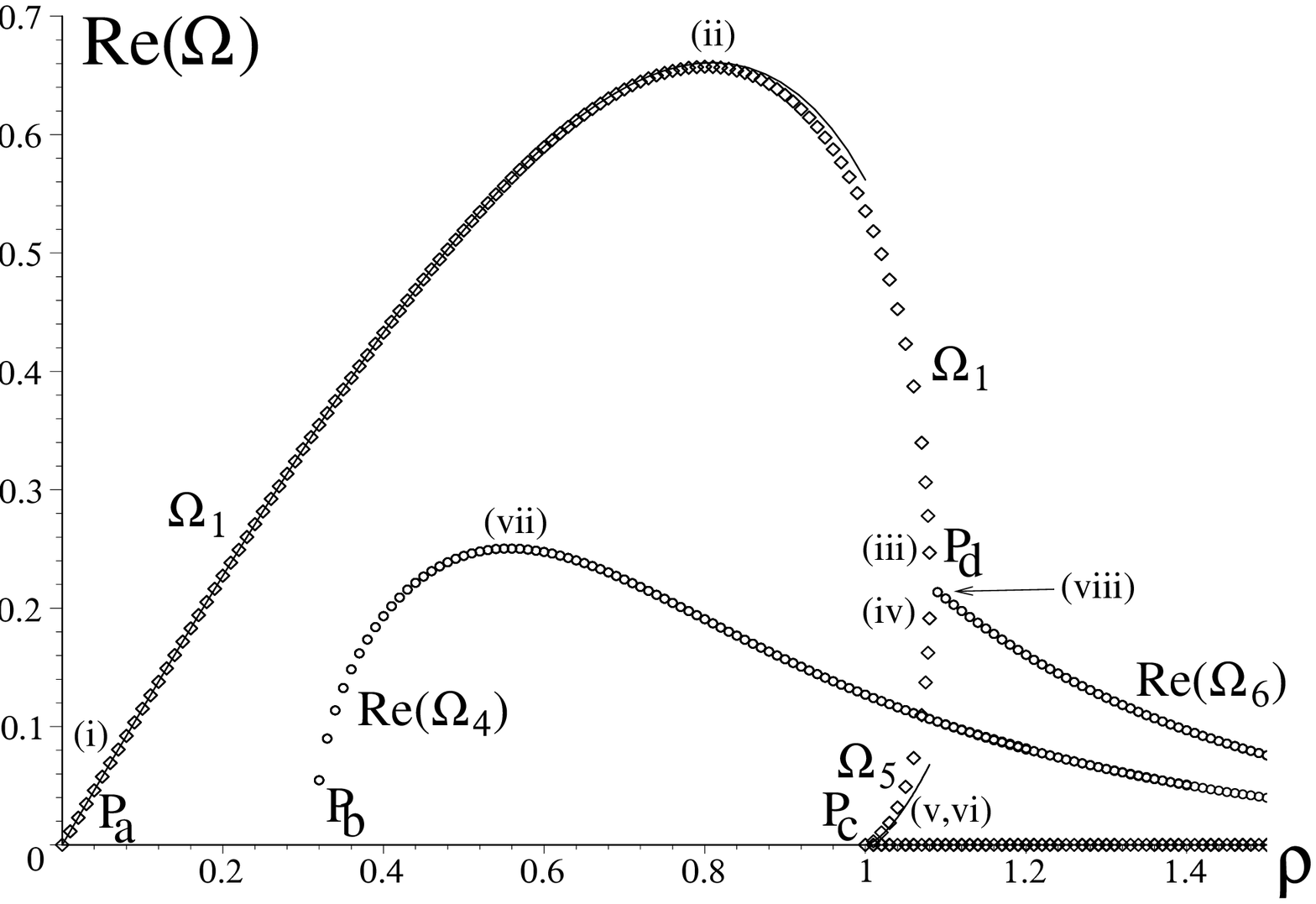}
\includegraphics[width=70mm,height=50mm]{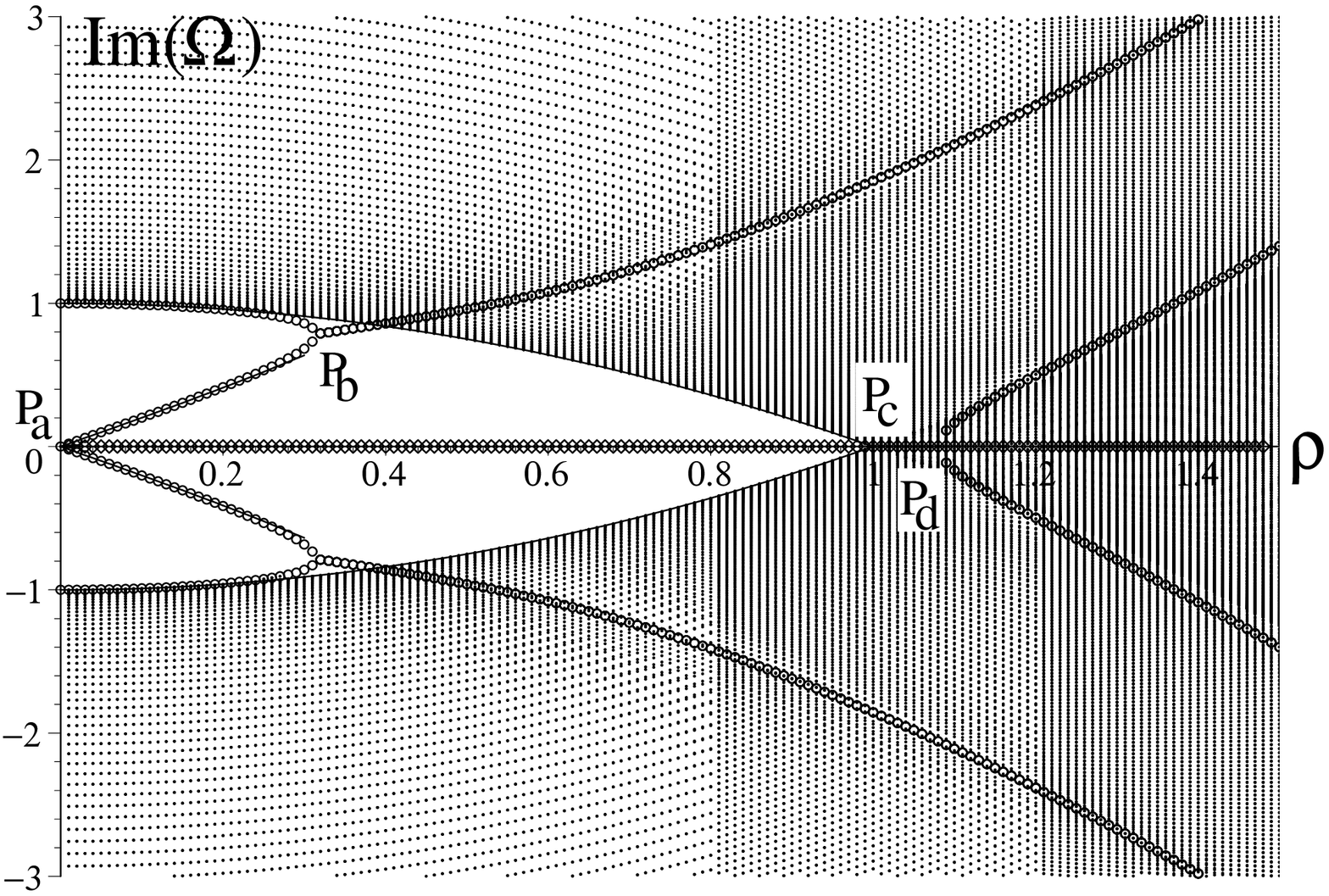}
\end{center}
\caption{Numerical computations of the real (left panel) and imaginary (right panel)
parts of the isolated eigenvalues and the continuous spectrum of the spectral
stability problem (\ref{problem-1}) versus the transverse wave number $\rho$.
Reprinted from \cite{DecPel}.}
\label{tfig1}
\end{figure}

An asymptotic argument for the presence of a real unstable eigenvalue bifurcating at $P_a$
for small values of $\rho$ was given in the pioneering paper \cite{ZakRub}.
The Hamiltonian Hopf bifurcation of a complex quartet at $P_b$ for $\rho \approx 0.31$ was explained in \cite{DecPel}
based on the negative index theory. That paper also proved the bifurcation of a new unstable real eigenvalue
at $P_c$ for $\rho > 1$, using Evans function methods. What is left in this puzzle is an argument for the existence of
unstable eigenvalues for arbitrarily large values of $\rho$. This is the problem
addressed in the present paper.

The motivation to develop a proof of the existence of unstable eigenvalues for large values of $\rho$ originates from different physical experiments (both old and new). First, Ablowitz and Segur \cite{AS79} predicted there are
no instabilities in the limit of large $\rho$ and referred to water wave experiments done in narrow
wave tanks by J. Hammack at the University of Florida in 1979, which showed good agreement with the dynamics
of the one-dimensional NLS equation. Observation of one-dimensional NLS solitons in this limit seems to exclude
transverse instabilities of line solitons.

Second, experimental observations of transverse instabilities are quite robust in the context of
nonlinear laser optics via a four-wave mixing interaction. Gorza {\em et al.}
\cite{Gorza1} observed the primary snake-type instability of line solitons at $P_a$ for small
values of $\rho$ as well as the persistence of the instabilities for large values of $\rho$.
Recently, Gorza {\em et al.} \cite{Gorza2} demonstrated experimentally the presence of the secondary
neck-type instability that bifurcates at $P_b$ near $\rho \approx 0.31$.

In a different physical context of solitary waves in $PT$-symmetric waveguides,
results on the transverse instability of line solitons were
re-discovered by Alexeeva {\em et al.} \cite{Barashenkov}.
(The authors of \cite{Barashenkov} did not notice that their mathematical problem is identical
to the one for transverse instability of line solitons in the hyperbolic NLS equation.)
Appendix B in \cite{Barashenkov} contains asymptotic results suggesting that
if there are unstable eigenvalues of the spectral problem
(\ref{problem-1}) in the limit of large $\rho$, the instability growth rate
is exponentially small in terms of the large parameter $\rho$. No evidence to the fact that
these eigenvalues have {\em nonzero} instability growth rate was reported in \cite{Barashenkov}.

Finally and even more recently, similar instabilities of line solitons
in the hyperbolic NLS equation (\ref{NLS}) were observed numerically
in the context of the discrete nonlinear Schr\"{o}dinger equation
away from the anti-continuum limit \cite{PelYang}.

The rest of this article is organized as follows. Section 2
presents our main results. Section 3 gives the analytical proof of the
main theorem. Section 4 is devoted to computations of the precise asymptotic
formula for the unstable eigenvalues of the spectral stability problem (\ref{problem-1})
in the limit of large values of $\rho$. Section 5 summarizes our findings and discusses further problems.

\section{Main results}

To study the transverse instability of line solitons in the limit of large $\rho$,
we cast the spectral stability problem (\ref{problem-1})
in the semi-classical form by using the transformation
$$
\rho^2 = 1 + \frac{1}{\epsilon^2}, \quad \lambda = \frac{i \omega}{\epsilon^2},
$$
where $\epsilon$ is a small parameter. The spectral problem  (\ref{problem-1})
is rewritten in the form
\begin{equation}
\label{problem-2}
\begin{array}{l}
\left(-\epsilon^2 \partial_x^2 - 1 - 6 \epsilon^2 {\rm sech}^2(x)\right) U = - i \omega V, \\
\left(-\epsilon^2 \partial_x^2 - 1 - 2 \epsilon^2 {\rm sech}^2(x)\right) V = i \omega U.
\end{array}
\end{equation}
Note that we are especially interested in the spectrum of this problem
for $\epsilon\rightarrow 0$, which corresponds to $\rho\rightarrow \infty$
in the original problem. Also, the real part of $\lambda$, which determines
the instability growth rate for (\ref{problem-1}) corresponds, up to a
factor of $\epsilon^2$, to the imaginary part of $\omega$.

Next, we introduce new dependent variables which are more suitable for working with continuous
spectrum for real values of $\omega$:
$$
\varphi := U + i V, \quad \psi := U - i V.
$$
Note that $\varphi$ and $\psi$ are not generally complex conjugates of each other
because $U$ and $V$ may be complex valued since the spectral problem (\ref{problem-2}) is not self-adjoint.
The spectral problem (\ref{problem-2}) is rewritten in the form
\begin{equation}
\label{problem-3}
\begin{array}{l}
\left(-\epsilon^2 \partial_x^2 + \omega - 1 - 4 \epsilon^2 {\rm sech}^2(x)\right) \varphi - 2 \epsilon^2 {\rm sech}^2(x) \psi = 0, \\
\left(-\epsilon^2 \partial_x^2 - \omega - 1 - 4 \epsilon^2 {\rm sech}^2(x)\right) \psi - 2 \epsilon^2 {\rm sech}^2(x) \varphi = 0.
\end{array}
\end{equation}

We note that the Schr\"{o}dinger operator
\begin{equation}
\label{operator-L-0}
L_0 = -\partial_x^2 - 4 {\rm sech}^2(x)
\end{equation}
admits exactly two eigenvalues of the discrete spectrum located at $-E_0$ and $-E_1$ \cite{MF}, where
\begin{equation}
\label{eigenvalues-L-0}
E_0 = \left( \frac{\sqrt{17}-1}{2} \right)^2, \quad
E_1 = \left( \frac{\sqrt{17}-3}{2} \right)^2.
\end{equation}
The associated eigenfunctions are
\begin{equation}
\label{eigenstates-L-0}
\varphi_0 = {\rm sech}^{\sqrt{E_0}}(x), \quad \varphi_1 = \tanh(x) {\rm sech}^{\sqrt{E_1}}(x).
\end{equation}
In the neighborhood of each of these eigenvalues, one can construct a perturbation expansion for
exponentially decaying eigenfunction pairs $(\varphi,\psi)$ and a quartet of complex eigenvalues
$\omega$ of the original spectral problem (\ref{problem-3}). This idea appears already
in Appendix B of \cite{Barashenkov}, where formal perturbation expansions are developed
in powers of $\epsilon$.

Note that the perturbation expansion for the spectral stability problem (\ref{problem-3})
is not a standard application of the Lyapunov--Schmidt reduction method \cite{hale}
because the eigenvalues of the limiting problem given by the operator $L_0$ are embedded
into a branch of the continuous spectrum. Therefore, to justify the perturbation expansions
and to derive the main result, we need a perturbation theory that involves
Fermi's Golden Rule \cite{GS}. An alternative version of this perturbation theory can use
the analytic continuation of the Evans function across the continuous spectrum,
similar to the one in \cite{DecPel}. Additionally, one can think of semi-classical
methods like WKB theory to be suitable for applications to this problem \cite{Dobr}.

The main results of this paper are as follows. To formulate the statements,
we are using the notation $|a| \lesssim \epsilon$ to indicate that for sufficiently small
positive values of $\epsilon$, there is an $\epsilon$-independent positive constant $C$
such that $|a| \leq C \epsilon$. Also, $H^2(\mathbb{R})$ denotes the standard Sobolov space of
distributions whose derivatives up to order two are square integrable.

\begin{theorem}
For sufficiently small $\epsilon > 0$, there exist two quartets of complex
eigenvalues $\{\omega, \bar{\omega}, -\omega, -\bar{\omega}\}$
in the spectral problem (\ref{problem-3}) associated with
the eigenvectors $(\varphi,\psi)$ in $H^2(\mathbb{R})$.

Let $(-E_0,\varphi_0)$ be one of the two eigenvalue--eigenvector pairs of
the operator $L_0$ in (\ref{operator-L-0}). There exists an $\epsilon_0 > 0$
such that for all $\epsilon \in (0,\epsilon_0)$, the complex eigenvalue $\omega$ in the first quadrant
and its associated eigenfunction satisfy
\begin{equation}
|\omega - 1 - \epsilon^2 E_0 | \lesssim \epsilon^3, \quad
\| \varphi - \varphi_0 \|_{L^2} \lesssim \epsilon, \quad
\| \psi\|_{L^{\infty}} \lesssim \epsilon,
\label{bounds-main}
\end{equation}
while the positive value of ${\rm Im}(\omega)$ is exponentially small in $\epsilon$.
\label{theorem-main}
\end{theorem}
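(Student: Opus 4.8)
The plan is to treat the spectral problem (\ref{problem-3}) as a perturbation of the limiting problem in which the first equation decouples to the Schr\"odinger eigenvalue problem $(L_0 + \omega - 1)\varphi = 0$ and the second equation becomes $(-\epsilon^2\partial_x^2 - \omega - 1)\psi = 2\epsilon^2\,{\rm sech}^2(x)\varphi$. At $\epsilon = 0$ the point $\omega = 1 + \epsilon^2 E_0$ sits at the edge of the continuous spectrum of the first component and strictly inside the continuous spectrum of the second component, so a naive Lyapunov--Schmidt reduction fails: the operator $-\epsilon^2\partial_x^2 - \omega - 1$ is not invertible on $L^2$ for ${\rm Re}(\omega) > -1$. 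The first step is therefore to build, for real $\omega$ near $1$, the Jost solutions of the second scalar Schr\"odinger equation and to select the unique solution $\psi$ satisfying the Sommerfeld radiation condition (outgoing waves as $x \to \pm\infty$); this defines a bounded-but-not-$L^2$ resolvent-like operator $\psi = \mathcal{R}_\epsilon(\omega)\big[2\epsilon^2\,{\rm sech}^2(x)\varphi\big]$ with $\|\psi\|_{L^\infty} \lesssim \epsilon^2\|\varphi\|_{L^1}$, and crucially an explicit leading-order amplitude of the radiated wave. Substituting this back into the first equation gives a closed nonlinear eigenvalue equation for $\varphi$ alone, namely $(L_0 + \omega - 1)\varphi = 4\epsilon^2\,{\rm sech}^2(x)\varphi + 2\epsilon^2\,{\rm sech}^2(x)\,\mathcal{R}_\epsilon(\omega)[2\epsilon^2\,{\rm sech}^2(x)\varphi]$, which is now a genuine (though $\omega$-dependent and non-self-adjoint) perturbation of $L_0$ near its isolated eigenvalue $-E_0$.

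Next I would set up the Lyapunov--Schmidt decomposition on this reduced equation: write $\varphi = \varphi_0 + \varphi_\perp$ with $\varphi_\perp \perp \varphi_0$ in $L^2$, and $\omega = 1 + \epsilon^2 E_0 + \epsilon^2\Omega$ with $\Omega$ small. Projecting onto the orthogonal complement of $\varphi_0$, the operator $L_0 + E_0$ restricted there is invertible with bounded inverse (the spectral gap of $L_0$ below its continuous spectrum plus the separation of $-E_0$ from $-E_1$), so by the implicit function theorem $\varphi_\perp = \varphi_\perp(\Omega,\epsilon)$ is uniquely determined and satisfies $\|\varphi_\perp\|_{H^2} \lesssim \epsilon^2$; here the small-$L^\infty$ bound on $\mathcal{R}_\epsilon$ and the rapid decay of ${\rm sech}^2(x)$ guarantee the nonlinear terms are well-controlled in $H^2$. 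The remaining bifurcation equation is the scalar equation obtained by projecting onto $\varphi_0$: after dividing by $\epsilon^2\|\varphi_0\|_{L^2}^2$ it takes the schematic form $\Omega = \langle \varphi_0, 4\,{\rm sech}^2(x)\varphi_0\rangle/\|\varphi_0\|^2 - E_0 + \mathcal{O}(\epsilon) + (\text{Fermi Golden Rule term})$, and one verifies that the $\mathcal{O}(1)$ real part pins down ${\rm Re}(\Omega)$ to order $\epsilon$, giving the first bound in (\ref{bounds-main}).

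The delicate point — and the step I expect to be the main obstacle — is extracting the imaginary part of $\omega$, i.e.\ proving it is strictly positive and exponentially small. The imaginary part arises solely from the radiation term: $\mathcal{R}_\epsilon(\omega)$ maps a real, rapidly decaying source to an outgoing wave whose amplitude at infinity carries a factor $e^{i k |x|/\epsilon}$ with $k = \sqrt{\omega+1} \approx \sqrt{2}$, and the overlap $\langle\varphi_0,\ {\rm sech}^2\,\mathcal{R}_\epsilon[\,{\rm sech}^2\varphi_0]\rangle$ acquires an imaginary part proportional to the squared modulus of the Fourier-type transform $\int e^{-i k x/\epsilon}\,{\rm sech}^2(x)\varphi_0(x)\,dx$. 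Because ${\rm sech}^2(x)\varphi_0(x)$ is analytic in a strip and decays exponentially, this integral is exponentially small, of order $e^{-\pi k/(2\epsilon)}$ up to algebraic prefactors, by contour shifting to the nearest singularity of the integrand (a pole of ${\rm sech}$ at $x = i\pi/2$). The positivity of ${\rm Im}(\omega)$ follows because this contribution enters with a definite sign (it is $2\pi$ times a nonnegative density, exactly as in Fermi's Golden Rule), so the eigenvalue is pushed off the real axis into the first quadrant rather than merely perturbed along it. Making this rigorous requires: (i) uniform control of the Jost functions and their analytic continuation in $\omega$ across the continuous-spectrum branch point near $\omega = 1$ — here one must be careful that $k = \sqrt{\omega+1}$ stays bounded away from zero, which it does; (ii) showing the exponentially small radiation term is not swamped by the $\mathcal{O}(\epsilon)$ and $\mathcal{O}(\epsilon^2)$ real corrections already present — it is not, because those corrections are purely real to the order needed, so the leading nonzero contribution to ${\rm Im}(\Omega)$ is genuinely the exponentially small one; and (iii) a fixed-point argument in a space that tracks real and imaginary parts separately (or equivalently, an argument via the analytically continued Evans function) to close the estimates. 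The sign and the precise exponential rate will be pinned down in the asymptotic computation of Section~4, but the existence statement and the bounds (\ref{bounds-main}) follow from the reduction and fixed-point scheme described above.
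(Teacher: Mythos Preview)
Your overall strategy coincides with the paper's: solve the $\psi$-equation via the outgoing (Sommerfeld) Green's function, feed the result into a Lyapunov--Schmidt reduction of the $\varphi$-equation near the isolated eigenvalue $-E_0$ of $L_0$, and read off ${\rm Im}(\omega)$ from a Fermi Golden Rule overlap. However, several of your intermediate formulas carry scaling errors. The first equation of (\ref{problem-3}), after dividing by $\epsilon^2$ and writing $\omega = 1+\epsilon^2 E$, is simply $(L_0+E)\varphi = 2\,{\rm sech}^2(x)\,\psi$; there is no residual $4\epsilon^2{\rm sech}^2\varphi$ term on the right, and the eigenvalue parameter enters as $E=(\omega-1)/\epsilon^2$, not $\omega-1$. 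With the correct scaling the bifurcation equation is $\mathcal{E}=2\langle\varphi_0,{\rm sech}^2\psi\rangle/\|\varphi_0\|^2$, so there is no $\mathcal{O}(1)$ contribution of the form $\langle\varphi_0,4\,{\rm sech}^2\varphi_0\rangle/\|\varphi_0\|^2 - E_0$ (which is nonzero and would contradict the bound $|\omega-1-\epsilon^2 E_0|\lesssim\epsilon^3$). Relatedly, the outgoing resolvent produces $\|\psi\|_{L^\infty}\lesssim\epsilon$, not $\epsilon^2$: the source is $\mathcal{O}(\epsilon^2)$ but the Green's function carries a factor $(2ik)^{-1}\sim\epsilon/\sqrt{8}$ after dividing through by $\epsilon^2$, so one power of $\epsilon$ is lost. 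Consequently $\|\varphi_\perp\|_{H^2}\lesssim\epsilon$, which is exactly what (\ref{bounds-main}) asserts.

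On the imaginary part, your concern (ii) is the genuine crux, and your resolution (``the other corrections are purely real to the order needed'') is too quick as written: $\psi$ is complex-valued because of the radiation condition, so the corrections feeding back into $\mathcal{E}$ are not manifestly real. What makes the argument close is that ${\rm Im}\,\psi$ is itself exponentially small (it is the cosine transform of a rapidly decaying analytic source at wave number $k\sim\sqrt{2}/\epsilon$), so the only contribution to ${\rm Im}(\mathcal{E})$ at leading order is indeed the Fermi term. The paper bypasses this bookkeeping with a clean device you did not mention: integrating $\bar\psi(-\partial_x^2-k^2-4\,{\rm sech}^2)\psi$ by parts and using the asymptotics $\psi\sim a\,e^{\pm ikx}$ yields the exact identity ${\rm Im}(\mathcal{E}) = 2k\,|a(\epsilon)|^2\bigl(1+\mathcal{O}(\epsilon)\bigr)$, with $a$ given explicitly by (\ref{tail-amplitude}). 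This Wronskian identity delivers strict positivity and the exponential smallness simultaneously, with no need to track real and imaginary parts separately through the fixed-point scheme.
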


\begin{proposition}
Besides the two quartets of complex eigenvalues in Theorem \ref{theorem-main}, no other
eigenvalues of the spectral problem (\ref{problem-3}) exist for sufficiently small $\epsilon > 0$.
\label{proposition-1}
\end{proposition}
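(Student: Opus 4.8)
\textbf{Proof proposal for Proposition \ref{proposition-1}.}
The plan is to show that any $H^2$ eigenpair $(\varphi,\psi)$ with eigenvalue $\omega$ must, after rescaling, be a small perturbation of one of the bound states of $L_0$, which leaves only the four families already produced in Theorem \ref{theorem-main}. First I would dispose of the trivial regions of the $\omega$-plane. If $|\mathrm{Re}(\omega)-1|$ is bounded away from zero or $|\mathrm{Im}(\omega)|$ is bounded away from zero (on an $\epsilon$-independent scale), then in both equations of (\ref{problem-3}) the operator $-\epsilon^2\partial_x^2 + \omega \mp 1$ is invertible on $L^2$ with norm $O(1)$ while the potential terms are $O(\epsilon^2)$; a Neumann-series / contraction argument then forces $\varphi=\psi=0$. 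The same contraction argument rules out eigenvalues near $\omega=-1$: there the second equation is the singular one, but rescaling $x=\epsilon z$ turns its leading part into $L_0$ shifted by $(\omega+1)/\epsilon^2$, which has only the two negative eigenvalues $-E_0,-E_1$, so the only possible accumulation points are $\omega=-1-\epsilon^2E_0$ and $\omega=-1-\epsilon^2E_1$ — these are exactly $-\bar\omega$ and its partner from the quartets, so no new eigenvalue appears there either. Symmetry $\omega\mapsto-\omega$, $\omega\mapsto\bar\omega$ of (\ref{problem-3}) (swapping $\varphi\leftrightarrow\psi$ and conjugating) then reduces everything to a neighborhood of $\omega=1$ of size $o(1)$.

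In that neighborhood I would rescale $x=\epsilon z$ and write $\omega = 1+\epsilon^2\Omega$ with $\Omega$ in a bounded set (the a priori bound $|\omega-1|=o(1)$ makes $\Omega$ bounded once we also use that $\Omega$ cannot run off to $-\infty$, since for $\mathrm{Re}(\Omega)$ large and negative the first operator $L_0-\Omega$ is again boundedly invertible). The first equation becomes $(L_0-\Omega)\varphi = 2\,\mathrm{sech}^2(\epsilon z)\,\psi + 4(\mathrm{sech}^2(\epsilon z)-1)\cdots$ — more precisely $(L_0 - \Omega)\varphi = O(\epsilon^2)$-type coupling to $\psi$ plus the discrepancy between $\mathrm{sech}^2(\epsilon z)$ and its value — while the second equation reads $(-\partial_z^2 - 2 - \epsilon^2\Omega - \cdots)\psi = 2\,\mathrm{sech}^2(\epsilon z)\,\varphi$, whose operator on the left has spectrum $[2,\infty)$ bounded away from $0$, so $\psi = (\,\cdot\,)^{-1}(2\,\mathrm{sech}^2(\epsilon z)\varphi)$ is slaved to $\varphi$ with $\|\psi\|\lesssim \epsilon^2\|\varphi\|$ (the Jost-function / resolvent bounds of Section 3 give the needed uniformity in $\epsilon$). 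Substituting back, $\varphi$ satisfies $(L_0-\Omega)\varphi = R_\epsilon\varphi$ with $\|R_\epsilon\|_{L^2\to L^2}\lesssim\epsilon$. Since the discrete spectrum of $L_0$ is exactly $\{-E_0,-E_1\}$ and its continuous spectrum is $[0,\infty)$, an eigenvalue requires $\Omega$ within $O(\epsilon)$ of $-E_0$ or $-E_1$ (for $\Omega\ge -c$ one uses that $(L_0-\Omega)^{-1}$ restricted to the orthogonal complement of $\varphi_{0},\varphi_1$ is bounded, and the continuous-spectrum part is handled by the Sommerfeld/Jost machinery already built), i.e.\ $\omega$ within $O(\epsilon^3)$ of $1+\epsilon^2E_0$ or $1+\epsilon^2E_1$.

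It remains to show that in each of those two $O(\epsilon^3)$-windows the eigenvalue is \emph{unique}. Here I would invoke the Lyapunov--Schmidt reduction from Section 3 directly: projecting onto $\varphi_j$ and its complement, the existence problem collapses to a single scalar (bifurcation) equation $g(\omega,\epsilon)=0$, and one checks $\partial_\omega g\neq 0$ at the root. The construction in Theorem \ref{theorem-main} already produces one root of $g$ with the stated asymptotics; the non-degeneracy $\partial_\omega g\ne 0$ — essentially because the reduced equation has the form $(\omega - 1 - \epsilon^2 E_j) + (\text{exponentially small Fermi-Golden-Rule term}) + o(\epsilon^3) = 0$ with leading derivative $1$ in $\omega$ — shows the root is simple and isolated, hence the only one in the window. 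Combining the four windows (two from $E_0,E_1$ near $\omega=1$, two mirror images near $\omega=-1$) with the conjugation symmetry accounts for precisely the two quartets of Theorem \ref{theorem-main} and nothing else.

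The main obstacle I anticipate is the \emph{uniformity in $\epsilon$ of the resolvent bounds near the edge of the continuous spectrum}: when $\Omega$ approaches $0^-$ (equivalently $\omega\to 1^+$) the inverse of $L_0-\Omega$ on $L^2$ blows up, and the rescaled potential $\mathrm{sech}^2(\epsilon z)$ is only slowly decaying, so one cannot naively treat $R_\epsilon$ as a small perturbation on all of $L^2$. This is exactly the regime where the embedded-eigenvalue phenomenon lives, and controlling it requires the weighted-space / Jost-function estimates and the analytic continuation of the Evans function developed in Section 3 — the same tools that make Theorem \ref{theorem-main} work. Away from the band edge the argument is a routine contraction-mapping estimate; the delicate bookkeeping is entirely in the $O(\epsilon)$-neighborhood of $\Omega\in\{-E_0,-E_1,0\}$.
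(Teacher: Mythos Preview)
Your overall strategy---slave $\psi$ to $\varphi$, then show that the effective equation for $\varphi$ forces $E$ to be near one of the two bound states of $L_0$---is exactly the paper's. But the execution contains a genuine error that creates the very obstacle you worry about at the end.

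\textbf{Do not rescale $x=\epsilon z$.} In the first equation of (\ref{problem-3}) the potential is $\mathrm{sech}^2(x)$, not $\mathrm{sech}^2(\epsilon x)$; dividing that equation by $\epsilon^2$ (with $\omega=1+\epsilon^2 E$) already gives, in the \emph{original} variable $x$,
\[
(L_0+E)\,\varphi \;=\; 2\,\mathrm{sech}^2(x)\,\psi,
\]
with the honest operator $L_0=-\partial_x^2-4\,\mathrm{sech}^2(x)$ and an exponentially localized right-hand side. If instead you set $x=\epsilon z$, the equation becomes $(-\partial_z^2+\epsilon^2 E)\varphi = 4\epsilon^2\mathrm{sech}^2(\epsilon z)\varphi+2\epsilon^2\mathrm{sech}^2(\epsilon z)\psi$, which is \emph{not} $(L_0-\Omega)\varphi=\cdots$; the operator $L_0$ never appears. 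The slowly decaying potential $\mathrm{sech}^2(\epsilon z)$ and the attendant ``uniformity near the band edge'' difficulty you flag are artifacts of this unnecessary rescaling. Once you work in $x$, the coupling is through the fixed weight $\mathrm{sech}^2(x)\in L^1\cap L^\infty$, and no weighted-space subtlety arises.

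\textbf{The $\psi$-equation is not invertible in $L^2$.} Your claim that the operator on $\psi$ ``has spectrum $[2,\infty)$ bounded away from $0$'' is wrong: $-\epsilon^2\partial_x^2-2-\epsilon^2 E$ has essential spectrum $[-2-\epsilon^2 E,\infty)$, which contains $0$. The paper does \emph{not} invert it as a resolvent on $L^2$; it writes $\psi''+k^2\psi=f$ with $k=\epsilon^{-1}\sqrt{2+\epsilon^2 E}$ and solves by the explicit Green's function of Lemma~\ref{lemma-continuous} under Sommerfeld radiation conditions. Because $f=-2\,\mathrm{sech}^2(x)(\varphi+2\psi)\in L^1$, this yields $\psi\in L^\infty$ with $\|\psi\|_{L^\infty}\le (2|k|)^{-1}\|f\|_{L^1}=O(\epsilon)$ (one power of $\epsilon$, not two), and the map $\psi\mapsto$ (right side) is a contraction in $L^\infty$. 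Substituting back, the first equation reads $(L_0+E_c+\mathcal E)\varphi = O(\epsilon)$ in $L^2$; since $L_0+E_c$ is boundedly invertible for every fixed $E_c\notin\{E_0,E_1\}$ with $E_c\notin(-\infty,0]$, the Implicit Function Theorem forces $\varphi=0$, hence $\psi=0$. For $E_c\le 0$ real the paper observes that both components are oscillatory (continuous spectrum) so no $H^2$ eigenfunction exists, and $E_c=0$ is not a resonance of $L_0$.

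Your first paragraph (disposing of $\omega$ away from $\pm1$) and your last step (uniqueness inside each window via non-degeneracy of the reduced bifurcation equation) are correct additions that the paper leaves implicit; but the core argument should be run in $x$ with the $L^\infty$/Sommerfeld bound for $\psi$, not in the rescaled variable with an $L^2$ resolvent.
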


\begin{proposition}
The instability growth rates for the two complex quartets of eigenvalues in Theorem \ref{theorem-main}
are given explicitly as $\epsilon \to 0$ by
\begin{equation}
\label{asymptotics}
{\rm Re}(\lambda) = \frac{{\rm Im}(\omega)}{\epsilon^2} \sim
\frac{2^{p+\frac{3}{2}} \pi^2}{[\Gamma(p)]^2} \epsilon^{3-2p} e^{-\frac{\sqrt{2} \pi}{\epsilon}}, \quad
{\rm Re}(\lambda) = \frac{{\rm Im}(\omega)}{\epsilon^2} \sim
\frac{2^{p+\frac{5}{2}} \pi^2}{q^2 [\Gamma(q)]^2} \epsilon^{1-2q} e^{-\frac{\sqrt{2} \pi}{\epsilon}},
\end{equation}
where $p = 2 + \sqrt{E_0}$ and $q = 2 + \sqrt{E_1}$.
\label{proposition-2}
\end{proposition}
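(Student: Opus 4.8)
The plan is to extract the exponentially small imaginary part of $\omega$ from the Lyapunov--Schmidt reduction constructed in the proof of Theorem~\ref{theorem-main} and then to evaluate the resulting Fermi-Golden-Rule expression asymptotically as $\epsilon\to 0$. Writing $\kappa^2:=(\omega+1)/\epsilon^2$, so that $\kappa=\sqrt{2}/\epsilon+O(\epsilon)$ when $\omega=1+\epsilon^2 E_0+O(\epsilon^3)$, the second equation of (\ref{problem-3}) is solved as $\psi=-2\,\mathcal{G}_\kappa\!\left({\rm sech}^2(x)\,\varphi\right)$, where $\mathcal{G}_\kappa=(\partial_x^2+\kappa^2+4\,{\rm sech}^2 x)^{-1}$ is the resolvent of the limiting Schr\"odinger operator subject to the Sommerfeld radiation conditions (outgoing Jost solutions behaving like $e^{\pm i\kappa x}$ as $x\to\pm\infty$). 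Substituting back into the first equation of (\ref{problem-3}) and dividing by $\epsilon^2$ closes the system into a single equation $(L_0+\mu)\varphi=-4\,{\rm sech}^2(x)\,\mathcal{G}_\kappa\!\left({\rm sech}^2(x)\,\varphi\right)$ with $\mu:=(\omega-1)/\epsilon^2$. Projecting onto $\varphi_0$ and using $\varphi=\varphi_0+O(\epsilon)$ together with the bound on the complementary component coming from the range equation, one obtains the bifurcation equation
$$
(\mu-E_0)\,\|\varphi_0\|_{L^2}^2=-4\,\langle g,\mathcal{G}_\kappa g\rangle+(\text{higher order}),\qquad g:={\rm sech}^2(x)\,\varphi_0={\rm sech}^{p}(x),
$$
with $p=2+\sqrt{E_0}$ for the first quartet. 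The real part of $\langle g,\mathcal{G}_\kappa g\rangle$ only shifts ${\rm Re}(\omega)$, consistently with (\ref{bounds-main}); the instability growth rate is contained in its imaginary part.

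The second step is to compute ${\rm Im}\langle g,\mathcal{G}_\kappa g\rangle$. Since $\kappa$ is large, $\mathcal{G}_\kappa$ differs from the free outgoing resolvent, whose kernel is $(2i\kappa)^{-1}e^{i\kappa|x-y|}$, only through the $4\,{\rm sech}^2$ potential, and the latter affects the relevant quantities at relative order $\kappa^{-1}=O(\epsilon)$. A short residue computation --- equivalently, the spectral-density formula written in terms of the Jost functions constructed in Section~3 --- then gives
$$
{\rm Im}\langle g,\mathcal{G}_\kappa g\rangle=-\frac{|\hat g(\kappa)|^2}{2\kappa}\,(1+O(\epsilon)),\qquad \hat g(\kappa):=\int_{\R}{\rm sech}^{p}(x)\,e^{i\kappa x}\,dx=\frac{2^{p-1}}{\Gamma(p)}\,\Bigl|\Gamma\!\Bigl(\tfrac{p+i\kappa}{2}\Bigr)\Bigr|^2 ,
$$
so that ${\rm Im}(\omega)=\epsilon^2\,{\rm Im}(\mu)$ equals, up to a $1+O(\epsilon)$ factor and the explicit rational and normalization constants above, a positive multiple of $\epsilon^2\,\kappa^{-1}\,|\hat g(\kappa)|^2$; the sign is the one producing instability (${\rm Im}\,\omega>0$).

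The third step is the asymptotic evaluation. Stirling's formula for $|\Gamma(\sigma+it)|$ as $|t|\to\infty$ gives $\bigl|\Gamma(\tfrac{p+i\kappa}{2})\bigr|^2\sim 2\pi\,(\kappa/2)^{p-1}e^{-\pi\kappa/2}$, hence $\hat g(\kappa)\sim 2\pi\,[\Gamma(p)]^{-1}\kappa^{p-1}e^{-\pi\kappa/2}$. Squaring, setting $\kappa=\sqrt{2}/\epsilon$ (the $O(\epsilon)$ correction to $\kappa$ contributes only a $1+O(\epsilon)$ factor in the exponential), and using $\kappa^{2p-3}=2^{p-3/2}\epsilon^{3-2p}$ together with $e^{-\pi\kappa}=e^{-\sqrt{2}\pi/\epsilon}$, all numerical factors assemble into the first formula of (\ref{asymptotics}). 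For the second quartet one repeats the argument with $\varphi_1=\tanh(x)\,{\rm sech}^{\sqrt{E_1}}(x)$, so that now $g={\rm sech}^2(x)\,\varphi_1=-q^{-1}\,\partial_x\!\left[{\rm sech}^{q}(x)\right]$ with $q=2+\sqrt{E_1}$; then $\hat g(\kappa)=i\kappa\,q^{-1}\,\widehat{{\rm sech}^{q}}(\kappa)$, which supplies the additional factor $\kappa^2/q^2$ and, after the same bookkeeping, the second formula of (\ref{asymptotics}).

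The main obstacle is not the algebra but the rigorous justification that every quantity dropped above --- the $O(\epsilon)$ correction of $\varphi$ away from $\varphi_0$, the $4\,{\rm sech}^2$ potential inside $\mathcal{G}_\kappa$, the higher-order terms of the Lyapunov--Schmidt reduction, and the dependence of $\kappa$ on $\omega$ --- contributes to ${\rm Im}(\omega)$ only at relative order $O(\epsilon)$, hence is negligible against the leading exponentially small term. The delicate point is that several of these corrections appear at first sight to decay only like $e^{-\sqrt{2}\pi/(2\epsilon)}$, but each such term carries an oscillatory factor $e^{\pm i\kappa x}$ which, once paired against a profile that is localized and analytic in the strip $|{\rm Im}\,x|<\pi/2$, produces a second factor $e^{-\sqrt{2}\pi/(2\epsilon)}$ and restores the rate $e^{-\sqrt{2}\pi/\epsilon}$. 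Controlling this requires uniform-in-$\epsilon$ limiting-absorption bounds for $\mathcal{G}_\kappa$ across the continuous spectrum, together with the analyticity of ${\rm sech}^{p}$ and $\tanh$ in that strip --- precisely the Jost-function construction, the Sommerfeld conditions, and the Fermi-Golden-Rule perturbation theory set up in Section~3, of which Proposition~\ref{proposition-2} is the quantitative consequence as $\epsilon\to 0$.
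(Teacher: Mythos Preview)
Your proposal is correct and follows essentially the same route as the paper: extract ${\rm Im}(\omega)$ from the Fermi--Golden--Rule identity established in the proof of Theorem~\ref{theorem-main}, recognize the relevant integral as $\widehat{{\rm sech}^p}(\kappa)=\frac{2^{p-1}}{\Gamma(p)}\bigl|\Gamma(\tfrac{p+i\kappa}{2})\bigr|^2$, and apply the large-$|y|$ asymptotics of $|\Gamma(x+iy)|$, with the second quartet handled via $\text{sech}^2\varphi_1=-q^{-1}\partial_x[\text{sech}^q]$ exactly as you write. The only organizational difference is that the paper puts the $4\,{\rm sech}^2\psi$ term on the source side and uses the \emph{free} outgoing Green's function (Lemma~\ref{lemma-continuous}), whereas you absorb that potential into the resolvent $\mathcal{G}_\kappa$ and then argue it contributes at relative order $O(\epsilon)$; your final paragraph on controlling the apparently $e^{-\sqrt{2}\pi/(2\epsilon)}$ corrections is a more explicit articulation of what the paper encodes in the $(1+O(\epsilon))$ factor of~(\ref{im-E}).
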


Note that the result of Theorem \ref{theorem-main} guarantees that the two quartets of complex eigenvalues
that we can see on Figure \ref{tfig1} remain unstable for all large values of the transverse wave number $\rho$
in the spectral stability problem (\ref{problem-1}).

\section{Proof of Theorem \ref{theorem-main}}

By the symmetry of the problem, we need to prove Theorem \ref{theorem-main} only for one eigenvalue of each
complex quartet, {\em e.g.}, for $\omega$ in the first quadrant of the complex plane.
Let $\omega = 1 + \epsilon^2 E$ and rewrite the spectral problem (\ref{problem-3})
in the equivalent form
\begin{equation}
\label{problem-4}
\begin{array}{l}
\left(-\partial_x^2 - 4 {\rm sech}^2(x)\right) \varphi - 2 {\rm sech}^2(x) \psi = - E \varphi, \\
- 2 \psi - \epsilon^2 \left(\partial_x^2 + E + 4 {\rm sech}^2(x) \right) \psi = 2 \epsilon^2 {\rm sech}^2(x) \varphi.
\end{array}
\end{equation}

At the leading order, the first equation of system (\ref{problem-4}) has exponentially decaying
eigenfunctions (\ref{eigenstates-L-0}) for $E = E_0$ and $E = E_1$ in (\ref{eigenvalues-L-0}).
However, the second equation of system (\ref{problem-4})
does not admit exponentially decaying eigenfunctions for these values of $E$ because the operator
$$
L_{\epsilon}(E) := -2 - \epsilon^2 \left(\partial_x^2 + E + 4 {\rm sech}^2(x) \right)
$$
is not invertible for these values of $E$. The scattering problem for Jost functions
associated with the continuous spectrum of the operator $L_{\epsilon}(E)$ admits
solutions that behave at infinity as
$$
\psi(x) \sim e^{i k x}, \quad \mbox{\rm where} \quad k^2 = E + \frac{2}{\epsilon^2}.
$$
If ${\rm Im}(E) > 0$, then ${\rm Re}(k) {\rm Im}(k) > 0$. The Sommerfeld radiation conditions
$\psi(x) \sim e^{\pm i k x}$ as $x \to \pm \infty$ correspond to solutions $\psi(x)$ that are
exponentially decaying in $x$ when $k$ is extended from
real positive values for ${\rm Im}(E) = 0$ to complex values with ${\rm Im}(k) > 0$
for ${\rm Im}(E) > 0$. Thus
we impose Sommerfeld boundary conditions for the component $\psi$
satisfying the spectral problem (\ref{problem-4}):
\begin{equation}
\label{Sommerfeld}
\psi(x) \to a \left\{ \begin{array}{l} e^{i k x}, \quad \quad \;\; x \to \infty, \\
\sigma e^{-i k x}, \quad x \to -\infty, \end{array} \right. \quad
k = \frac{1}{\epsilon} \sqrt{2 + \epsilon^2 E},
\end{equation}
where $a$ is the radiation tail amplitude to be determined
and $\sigma = \pm 1$ depends on whether $\psi$ is even or odd in $x$.
To compute $a$, we note the following elementary result.

\begin{lemma}
Consider bounded (in $L^\infty(\mathbb{R})$) solutions $\psi(x)$ of the second-order differential equation
\begin{equation}
\label{Green}
\psi'' + k^2 \psi = f,
\end{equation}
where $k \in \mathbb{C}$ with ${\rm Re}(k) > 0$ and ${\rm Im}(k) \geq 0$,
whereas $f \in L^1(\mathbb{R})$ is a given function, either even or odd.
Then
\begin{equation}
\label{wave}
\psi(x) = \frac{1}{2 i k } \int_{-\infty}^{x} e^{ik(x-y)} f(y) dy
+ \frac{1}{2ik} \int_x^{+\infty} e^{-ik(x-y)} f(y)  dy
\end{equation}
is the unique solution of the differential equation (\ref{Green})
with the same parity as $f$ that satisfies the Sommerfeld radiation conditions (\ref{Sommerfeld})
with
\begin{equation}
\label{tail-amplitude}
a = \frac{1}{2ik} \int_{-\infty}^{+\infty} f(y) e^{-iky} dy.
\end{equation}
\label{lemma-continuous}
\end{lemma}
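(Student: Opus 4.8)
The plan is to verify directly that the explicit formula \eqref{wave} solves the ODE \eqref{Green}, respects the prescribed parity, obeys the Sommerfeld conditions \eqref{Sommerfeld} with the stated amplitude \eqref{tail-amplitude}, and is the unique bounded solution with these properties. First I would check that \eqref{wave} is a particular solution: the kernel $G(x,y) = \tfrac{1}{2ik} e^{ik|x-y|}$ is the standard outgoing Green's function for $\partial_x^2 + k^2$, so differentiating under the integral sign twice (the boundary terms at $y = x$ combine to produce the Dirac mass, since $\partial_x e^{ik|x-y|}$ has a jump of $2ik$ across $y=x$) gives $\psi'' + k^2\psi = f$. The hypothesis $f \in L^1(\mathbb{R})$ together with ${\rm Re}(k) > 0$, ${\rm Im}(k) \geq 0$ guarantees that both integrals in \eqref{wave} converge absolutely and that $\psi \in L^\infty(\mathbb{R})$, because $|e^{ik(x-y)}| = e^{-{\rm Im}(k)(x-y)} \leq 1$ for $y \leq x$ and similarly $|e^{-ik(x-y)}| \leq 1$ for $y \geq x$.

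Next I would read off the asymptotics. As $x \to +\infty$, the second integral in \eqref{wave} tends to zero by dominated convergence (its integrand is bounded by $|f(y)|$ and the domain of integration shrinks), while the first integral becomes $\tfrac{1}{2ik} e^{ikx} \int_{-\infty}^{+\infty} e^{-iky} f(y)\,dy = a\, e^{ikx}$, giving the $x \to +\infty$ branch of \eqref{Sommerfeld}. For $x \to -\infty$ one argues symmetrically: the first integral vanishes and the second gives $\tfrac{1}{2ik} e^{-ikx} \int_{-\infty}^{+\infty} e^{iky} f(y)\,dy$; using that $f$ is either even or odd, the substitution $y \mapsto -y$ converts $\int e^{iky} f(y)\,dy$ into $\sigma \int e^{-iky} f(y)\,dy = 2ik\, \sigma a$ with $\sigma = +1$ for $f$ even and $\sigma = -1$ for $f$ odd, yielding the $x \to -\infty$ branch. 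The same even/odd substitution applied to \eqref{wave} itself, swapping the two integrals, shows $\psi(-x) = \sigma \psi(x)$, i.e.\ $\psi$ has the same parity as $f$.

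Finally, uniqueness: any two bounded solutions of \eqref{Green} differ by a bounded solution of the homogeneous equation $\chi'' + k^2 \chi = 0$, i.e.\ $\chi(x) = c_+ e^{ikx} + c_- e^{-ikx}$. When ${\rm Im}(k) > 0$ the term $e^{-ikx}$ blows up as $x \to +\infty$ and $e^{ikx}$ blows up as $x \to -\infty$, forcing $c_+ = c_- = 0$; when ${\rm Im}(k) = 0$ both terms are bounded, but the Sommerfeld conditions \eqref{Sommerfeld} require $\chi \sim (\text{const}) e^{ikx}$ at $+\infty$ with \emph{no} incoming $e^{-ikx}$ component, killing $c_-$, and $\chi \sim (\text{const}) e^{-ikx}$ at $-\infty$ with no $e^{ikx}$ component, killing $c_+$. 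Either way $\chi \equiv 0$, so \eqref{wave} is the unique solution with the asserted properties. I expect no serious obstacle here; the only point demanding a little care is the differentiation-under-the-integral step that produces the delta function at $y = x$, and the bookkeeping of the parity substitution to get the sign $\sigma$ correct — both are routine once set up properly.
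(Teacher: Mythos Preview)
Your argument is correct and complete. The paper takes a slightly different but equivalent route: rather than verifying the Green's-function formula \eqref{wave} directly and then establishing uniqueness as a separate step, it writes the general solution by variation of parameters,
\[
\psi(x) = e^{ikx}\Bigl[u(0) + \tfrac{1}{2ik}\textstyle\int_0^x f(y)e^{-iky}\,dy\Bigr]
         + e^{-ikx}\Bigl[v(0) - \tfrac{1}{2ik}\textstyle\int_0^x f(y)e^{iky}\,dy\Bigr],
\]
and then determines the two free constants $u(0)$, $v(0)$ by imposing the Sommerfeld conditions at $\pm\infty$; uniqueness is thus built into the construction. Your verify-then-uniqueness approach is arguably more self-contained (you handle the ${\rm Im}(k)>0$ and ${\rm Im}(k)=0$ cases for uniqueness explicitly, and you check the parity claim rather than merely asserting it), while the paper's variation-of-parameters derivation is shorter and makes the origin of the formula transparent. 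Either way the content is the same elementary linear ODE computation.
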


\begin{proof}
Solving (\ref{Green}) using variation of parameters, we obtain
$$
\psi(x) = e^{i k x} \left[ u(0) + \frac{1}{2ik} \int_0^{x} f(y) e^{-iky} dy \right]
+ e^{-i k x} \left[ v(0) -  \frac{1}{2ik} \int_0^{x} f(y) e^{iky} dy \right],
$$
where $u(0)$ and $v(0)$ are arbitrary constants. We fix these constants using the Sommerfeld radiation conditions (\ref{Sommerfeld}), which yields
$$
u(0) =  \frac{1}{2ik} \int_{-\infty}^0 f(y) e^{-iky} dy, \quad
v(0) =  \frac{1}{2ik} \int_0^{+\infty} f(y) e^{iky} dy.
$$
Using these expressions and the definition
$a = \lim_{x \to \infty} \psi(x) e^{-i k x}$, we obtain
(\ref{wave}) and (\ref{tail-amplitude}). It is easily checked that $\psi$ has the same parity as $f$.
\end{proof}

To prove Theorem \ref{theorem-main}, we select
one of the two eigenvalue--eigenvector pairs $(E_0,\varphi_0)$ of
the operator $L_0$ in (\ref{operator-L-0}) and
proceed with the Lyapunov--Schmidt decomposition
$$
E = E_0 + \mathcal{E}, \quad \varphi = \varphi_0 + \phi, \quad \phi \perp \varphi_0.
$$
To simplify calculations, we assume that $\varphi_0$ is normalized to unity in the
$L^2$ norm. The orthogonality condition $\phi \perp \varphi_0$ is used with respect to
the inner product in $L^2(\mathbb{R})$ and $\phi \in L^2(\mathbb{R})$ is assumed in
the decomposition.

The spectral problem (\ref{problem-4}) is rewritten in the form
\begin{equation}
\label{problem-5}
\begin{array}{l}
\left( L_0 + E_0 \right) \phi - 2 {\rm sech}^2(x) \psi = - \mathcal{E} (\varphi_0 + \phi), \\
L_{\epsilon}(E_0+\mathcal{E}) \psi = 2 \epsilon^2 {\rm sech}^2(x) (\varphi_0 + \phi).
\end{array}
\end{equation}
Because $\phi \perp \varphi_0$, the correction term $\mathcal{E}$ is uniquely determined by
projecting the first equation of the system (\ref{problem-5}) onto $\varphi_0$:
\begin{equation}
\label{bifurcation}
\mathcal{E} = 2 \int_{-\infty}^{\infty} {\rm sech}^2(x) \varphi_0(x) \psi(x) dx.
\end{equation}
If $\psi \in L^{\infty}(\mathbb{R})$, then $|\mathcal{E}| = \mathcal{O}(\| \psi \|_{L^{\infty}})$.
Let $P$ be the orthogonal projection from $L^2(\mathbb{R})$ to the range of $(L_0 + E_0)$.
Then, $\phi$ is uniquely determined from the linear inhomogeneous equation
\begin{equation}
\label{inhomogeneous-phi}
P(L_0 + E_0 + \mathcal{E})P \phi = 2 {\rm sech}^2(x) \psi - 2 \varphi_0
\int_{-\infty}^{\infty} {\rm sech}^2(x) \varphi_0(x) \psi(x) dx,
\end{equation}
where $P(L_0 + E_0) P$ is invertible with a bounded inverse and $\psi \in L^{\infty}(\mathbb{R})$
is assumed. On the other hand, $\psi \in L^{\infty}(\mathbb{R})$ is uniquely found using the
linear inhomogeneous equation
\begin{equation}
\label{inhomogeneous-psi}
\psi'' + k^2 \psi = f, \quad {\rm where} \quad
f = - 2{\rm sech}^2(x) (\varphi_0 + \phi + 2 \psi),
\end{equation}
subject to the Sommerfeld radiation condition (\ref{Sommerfeld}), where $\phi \in L^{\infty}(\mathbb{R})$
is assumed. Note that $\psi$ is not real because of the Sommerfeld radiation condition (\ref{Sommerfeld})
and depends on $\epsilon$ because of the $\epsilon$-dependence of $k$ in
\begin{equation}
\label{def-k}
k = \frac{1}{\epsilon} \sqrt{2 + \epsilon^2 E_0 + \epsilon^2 \mathcal{E}}.
\end{equation}
We are now ready to prove Theorem \ref{theorem-main}. \\

\begin{proof1}{\em of Theorem \ref{theorem-main}.}
The function $f$ on the right-hand-side of (\ref{inhomogeneous-psi})
is exponentially decaying as $|x| \to \infty$ if $\phi, \psi \in L^{\infty}(\mathbb{R})$.
From the solution (\ref{wave}), we rewrite the equation into the integral form
\begin{eqnarray}
\nonumber
\psi(x) & = & \frac{i \epsilon}{\sqrt{2 + \epsilon^2 E_0 + \epsilon^2 \mathcal{E}}}
\int_{-\infty}^{x} e^{ik(x-y)} {\rm sech}^2(y) (\varphi_0 + \phi + 2 \psi)(y) dy \\
& \phantom{t} &
+ \frac{i \epsilon}{\sqrt{2 + \epsilon^2 E_0 + \epsilon^2 \mathcal{E}}}
\int_x^{+\infty} e^{-ik(x-y)} {\rm sech}^2(y) (\varphi_0 + \phi + 2 \psi)(y)  dy.
\label{integral}
\end{eqnarray}
The right-hand-side operator acting on $\psi \in L^{\infty}(\mathbb{R})$
is a contraction for small values of $\epsilon$ if $\phi \in L^{\infty}(\mathbb{R})$
and $\mathcal{E} \in \mathbb{C}$ are bounded as $\epsilon \to 0$, and for
${\rm Im}(\mathcal{E}) \geq 0$ (yielding ${\rm Im}(k) \geq 0$). By the Fixed Point Theorem
\cite{hale}, we have a unique solution  $\psi \in L^{\infty}(\mathbb{R})$ of the integral equation
(\ref{integral}) for small values of $\epsilon$ such that
$\| \psi \|_{L^{\infty}} = \mathcal{O}(\epsilon)$ as $\epsilon \to 0$.
This solution can be substituted into the inhomogeneous equation (\ref{inhomogeneous-phi}).

Since $|\mathcal{E}| = \mathcal{O}(\| \psi \|_{L^{\infty}}) = \mathcal{O}(\epsilon)$ as $\epsilon \to 0$
and the operator $P(L_0 + E_0) P$ is invertible with a bounded inverse,
we apply the Implicit Function Theorem and obtain a unique solution $\phi \in H^2(\mathbb{R})$
of the inhomogeneous equation (\ref{inhomogeneous-phi}) for small values of $\epsilon$
such that $\| \phi \|_{H^2} = \mathcal{O}(\epsilon)$ as $\epsilon \to 0$. Note that by Sobolev
embedding of $H^2(\mathbb{R})$ to $L^{\infty}(\mathbb{R})$, the earlier assumption $\phi \in L^{\infty}(\mathbb{R})$
for finding $\psi \in L^{\infty}(\mathbb{R})$ in (\ref{inhomogeneous-psi}) is consistent
with the solution $\phi \in H^2(\mathbb{R})$.

This proves bounds (\ref{bounds-main}). It remains to show that ${\rm Im}(\mathcal{E}) > 0$
for small nonzero values of $\epsilon$. If so, then the real eigenvalue
$1 + \epsilon^2 E_0$ bifurcates to the first complex quadrant and yields the eigenvalue
$\omega = 1 + \epsilon^2 E_0 + \epsilon^2 \mathcal{E}$
of the spectral problem (\ref{problem-3}) with ${\rm Im}(\omega) > 0$.
Persistence of such an isolated eigenvalue with respect to small values of $\epsilon$
follows from regular perturbation theory. Also, the eigenfunction $\psi$ in (\ref{integral})
is exponentially decaying in $x$ at infinity if ${\rm Im}(\mathcal{E}) > 0$. As a result,
the eigenvector $(\phi,\psi)$ is defined in $H^2(\mathbb{R})$
for small nonzero values of $\epsilon$, although $\| \psi \|_{H^2}$ diverges as $\epsilon \to 0$.

To prove that ${\rm Im}(\mathcal{E}) > 0$ for small but nonzero values of $\epsilon$, we
use (\ref{Sommerfeld}) and (\ref{inhomogeneous-psi}), integrate by parts, and
obtain the exact relation
\begin{eqnarray*}
2 \int_{-\infty}^{\infty} {\rm sech}^2(x) (\varphi_0 + \phi) \psi(x) dx & = &
\int_{-\infty}^{\infty} \bar{\psi}(x) \left( - \partial_x^2 - k^2 - 4 {\rm sech}^2(x) \right) \psi(x) dx \\
& = & \left( - \bar{\psi} \psi_x + \bar{\psi}_x \psi \right) \biggr|_{x \to -\infty}^{x \to +\infty} \\
& \phantom{t} & \phantom{text}  +
\int_{-\infty}^{\infty} \psi(x) \left( - \partial_x^2 - k^2 - 4 {\rm sech}^2(x) \right) \bar{\psi}(x) dx \\
& = & 4 i k |a(\epsilon)|^2 + 2 \int_{-\infty}^{\infty} {\rm sech}^2(x) (\varphi_0 + \phi) \bar{\psi}(x) dx.
\end{eqnarray*}
By using bounds (\ref{bounds-main}), definition (\ref{tail-amplitude}), and projection
(\ref{bifurcation}), we obtain
\begin{eqnarray}
\nonumber
{\rm Im}(\mathcal{E}) & = & 2 {\rm Im} \int_{-\infty}^{\infty} {\rm sech}^2(x) \varphi_0(x) \psi(x) dx
= 2 k |a(\epsilon)|^2 \left(1 + \mathcal{O}(\epsilon) \right)\\
\label{im-E}
& = & \frac{2}{k} \left| \int_{-\infty}^{+\infty} {\rm sech}^2(x) \varphi_0(x) e^{-ikx} dx \right|^2
\left(1 + \mathcal{O}(\epsilon) \right),
\end{eqnarray}
which is strictly positive. Note that this expression is referred to as Fermi's Golden Rule
in quantum mechanics \cite{GS}.
Since $k = \mathcal{O}(\epsilon^{-1})$ as $\epsilon \to 0$,
the Fourier transform of ${\rm sech}^2(x) \varphi_0(x)$ at this $k$
is exponentially small in $\epsilon$. Therefore, ${\rm Im}(\omega) > 0$ is exponentially small
in $\epsilon$. The statement of the theorem is proved.
\end{proof1}

\section{Proofs of Propositions \ref{proposition-1} and \ref{proposition-2}}

To prove Proposition \ref{proposition-1}, let us fix $E_c$ to be $\epsilon$-independent and
different from  $E_0$ and $E_1$ in (\ref{eigenvalues-L-0}). We write $E = E_c + \mathcal{E}$
for some small $\epsilon$-dependent values of $\mathcal{E}$.
The spectral problem (\ref{problem-4}) is rewritten as
\begin{equation}
\label{problem-15}
\begin{array}{l}
\left( L_0 + E_c \right) \varphi - 2 {\rm sech}^2(x) \psi = - \mathcal{E} \varphi, \\
L_{\epsilon}(E_c+\mathcal{E}) \psi = 2 \epsilon^2 {\rm sech}^2(x) \varphi.
\end{array}
\end{equation}
\\

\begin{proof1}{\em of Proposition \ref{proposition-1}.}
If $E_c$ is real and negative, the system (\ref{problem-15}) has only
oscillatory solutions, hence exponentially decaying eigenfunctions
do not exist for values of $E$ near $E_c$.
Furthermore, note that the Schr\"{o}dinger operator $L_0$ in (\ref{operator-L-0})
has no end-point resonances. Therefore no bifurcation of isolated eigenvalues
may occur if $E_c = 0$. Thus, we consider positive values of $E_c$
if $E_c$ is real and values with ${\rm Im}(E_c) > 0$ if $E_c$ is complex.

By Lemma \ref{lemma-continuous}, we rewrite the second equation of the system
(\ref{problem-15}) in the integral form
\begin{eqnarray}
\nonumber
\psi(x) & = & \frac{i \epsilon}{\sqrt{2 + \epsilon^2 E_c + \epsilon^2 \mathcal{E}}}
\int_{-\infty}^{x} e^{ik(x-y)} {\rm sech}^2(y) (\varphi + 2 \psi)(y) dy \\
& \phantom{t} &
+ \frac{i \epsilon}{\sqrt{2 + \epsilon^2 E_c + \epsilon^2 \mathcal{E}}}
\int_x^{+\infty} e^{-ik(x-y)} {\rm sech}^2(y) (\varphi + 2 \psi)(y)  dy.
\label{integral-15}
\end{eqnarray}
Again, the right-hand-side operator on $\psi \in L^{\infty}(\mathbb{R})$
is a contraction for small values of $\epsilon$ if $\varphi \in L^{\infty}(\mathbb{R})$
and $\mathcal{E} \in \mathbb{C}$ are bounded as $\epsilon \to 0$, and for
${\rm Im}(E_c + \mathcal{E}) \geq 0$ (yielding ${\rm Im}(k) \geq 0$). By the Fixed Point Theorem,
under these conditions we have a unique solution  $\psi \in L^{\infty}(\mathbb{R})$ of the integral equation
(\ref{integral-15}) for small values of $\epsilon$ such that
$\| \psi \|_{L^{\infty}} = \mathcal{O}(\epsilon)$ as $\epsilon \to 0$.
This solution can be substituted into the first equation of the system
(\ref{problem-15}).

The operator $L_0 + E_c$ is invertible with a bounded inverse
if $E_c$ is complex or if $E_c$ is real and positive but different from $E_0$ and $E_1$.
By the Implicit Function Theorem, we obtain a unique solution $\varphi = 0$
of this homogeneous equation for small values of $\epsilon$ and for any value of
$\mathcal{E}$ as long as $\mathcal{E}$ is small as $\epsilon \to 0$ (since $E_c$ is fixed
independently of $\epsilon$). Next, with $\varphi = 0$, the unique solution
of the integral equation (\ref{integral-15}) is $\psi = 0$, hence $E = E_c + \mathcal{E}$
is not an eigenvalue of the spectral problem (\ref{problem-4}).
\end{proof1}

\vspace{0.2cm}

To prove Proposition \ref{proposition-2}, we compute ${\rm Im}(\omega)$ in
Theorem \ref{theorem-main} explicitly in the asymptotic limit $\epsilon \to 0$.
It follows from (\ref{def-k}) and
(\ref{im-E}) that
$$
{\rm Im}(\omega) =  \sqrt{2} \epsilon^3  \left| \int_{-\infty}^{+\infty} {\rm sech}^2(x) \varphi_0(x)
e^{-i k x} dx \right|^2 \left(1 + \mathcal{O}(\epsilon) \right),
$$
where $k = \sqrt{2} \epsilon^{-1} (1 + \mathcal{O}(\epsilon^2))$.\\

\begin{proof1}{\em of Proposition \ref{proposition-2}.}
Let us consider the first eigenfunction $\varphi_0$ in (\ref{eigenstates-L-0}) for the lowest
eigenvalue in (\ref{eigenvalues-L-0}). Using integral {\bf 3.985} in \cite{Grad}, we obtain
\begin{eqnarray*}
I_0 & = & \int_{-\infty}^{+\infty} {\rm sech}^2(x) \varphi_0(x)
e^{-i k x} dx = 2 \int_0^{\infty} {\rm sech}^p(x) \cos(kx) dx =
\frac{2^{p-1}}{\Gamma(p)} \left| \Gamma\left( \frac{p + i k}{2} \right) \right|^2,
\end{eqnarray*}
where $p = 2 + \sqrt{E_0} = (\sqrt{17}+3)/2$. Since $k = \mathcal{O}(\epsilon^{-1})$ and $\epsilon \to 0$,
we have use the asymptotic limit {\bf 8.328} in \cite{Grad}:
\begin{equation}
\label{limit}
\lim_{|y| \to \infty} |\Gamma(x+iy)| e^{\frac{\pi}{2} |y|} |y|^{\frac{1}{2}-x} = \sqrt{2\pi},
\end{equation}
from which we establish the asymptotic equivalence:
\begin{eqnarray*}
I_0 & = & \frac{2^{p-1}}{\Gamma(p)} \left| \Gamma\left( \frac{p + i k}{2} \right) \right|^2
\sim \frac{2 \pi}{\Gamma(p) k^{1-p}} e^{-\frac{\pi}{2} k} \sim
\frac{2^{\frac{p+1}{2}} \pi}{\Gamma(p)} \epsilon^{1-p} e^{-\frac{\pi}{\sqrt{2} \epsilon}}.
\end{eqnarray*}
Therefore, the leading asymptotic order for ${\rm Im}(\omega)$ is given by
\begin{equation}
\label{formula-1}
{\rm Im}(\omega) \sim \frac{2^{p+\frac{3}{2}} \pi^2}{[\Gamma(p)]^2} \epsilon^{5-2p} e^{-\frac{\sqrt{2} \pi}{\epsilon}}.
\end{equation}

Next, let us consider the second eigenfunction $\varphi_1$ in (\ref{eigenstates-L-0}) for the second
eigenvalue in (\ref{eigenvalues-L-0}). Using integral {\bf 3.985} in \cite{Grad}
and integration by parts, we obtain
\begin{eqnarray*}
I_1 & = & \int_{-\infty}^{+\infty} {\rm sech}^2(x) \varphi_1(x)
e^{-i k x} dx = -\frac{2ik}{q} \int_0^{\infty} {\rm sech}^q(x) \cos(kx) dx =
-\frac{i k 2^{q-1}}{q \Gamma(q)} \left| \Gamma\left( \frac{q + i k}{2} \right) \right|^2,
\end{eqnarray*}
where $q = 2 + \sqrt{E_1} = (\sqrt{17}+1)/2$. Using limit (\ref{limit}), we obtain
\begin{eqnarray*}
I_1 & = & -\frac{i k 2^{q-1}}{q \Gamma(q)} \left| \Gamma\left( \frac{q + i k}{2} \right) \right|^2
\sim - \frac{2 \pi i k}{q \Gamma(q) k^{1-q}} e^{-\frac{\pi}{2} k} \sim
- \frac{i 2^{\frac{p+2}{2}} \pi}{q \Gamma(q)} \epsilon^{-p} e^{-\frac{\pi}{\sqrt{2} \epsilon}}.
\end{eqnarray*}
Therefore, the leading asymptotic order for ${\rm Im}(\omega)$ is given by
\begin{equation}
\label{formula-2}
{\rm Im}(\omega) \sim \frac{2^{p+\frac{5}{2}} \pi^2}{q^2 [\Gamma(q)]^2} \epsilon^{3-2q} e^{-\frac{\sqrt{2} \pi}{\epsilon}}.
\end{equation}
In both cases (\ref{formula-1}) and (\ref{formula-2}), the expression for ${\rm Im}(\omega)$
have the algebraically large prefactor in $\epsilon$ with
the exponent $5 - 2p = 2 - \sqrt{17} < 0$ and $3 - 2q = 2 - \sqrt{17} < 0$. Nevertheless, ${\rm Im}(\omega)$
is exponentially small as $\epsilon \to 0$.
\end{proof1}

\section{Conclusion}

We have proved that the spectral stability problem (\ref{problem-1}) has exactly two
quartets of complex unstable eigenvalues in the asymptotic limit of large transverse wave numbers.
We have obtained precise asymptotic expressions for the instability growth rate
in the same limit.

It would be interesting to verify numerically the validity of our asymptotic results.
The numerical approximation of eigenvalues in this asymptotic limit is a delicate problem
of numerical analysis because of the high-frequency oscillations of the eigenfunctions
for large values of $\lambda$, {\em i.e.}, small values of $\epsilon$, as discussed in \cite{DecPel}. As
we can see in Figure 1, the existing numerical results do not allow us to compare with
the asymptotic results of our work. This numerical problem is left for further studies.\\

{\bf Acknowledgments:} The work of DEP, EAR, and OAK is supported by the Ministry of Education
and Science of Russian Federation (Project 14.B37.21.0868). BD acknowledges support 
from the National Science Foundation of the USA through grant NSF-DMS-1008001.

\end{document}